\newtheorem{thm}{Theorem}[section]
\newtheorem{conjecture}{Conjecture}[section]
\newtheorem{fact}{Fact}[section]
\newenvironment{manualtheorem}[1]{%
  \manualtheoreminner
}{\endmanualtheoreminner}
\theoremstyle{definition}
\newtheorem{definition}{Definition}[section]
\title{Bounds on Irrationality Measures and the Flint-Hills Series}
\author{Alex Meiburg\footnote{University of California Santa Barbara. Email: ameiburg@ucsb.edu}}
\begin{document}

\maketitle

\begin{abstract} 
It is unknown whether the Flint-Hills series $\sum_{n=1}^\infty \frac{1}{n^3\sin^2(n)}$ converges. Alekseyev (2011) connected this question to the irrationality measure of $\pi$, that $\mu(\pi) > \frac{5}{2}$ would imply divergence of the Flint-Hills series. In this paper we established a near-complete converse, that $\mu(\pi) < \frac{5}{2}$ would imply convergence. The associated results on the density of close rational approximations may be of independent interest. The remaining edge case of $\mu(\pi) = \frac{5}{2}$ is briefly addressed, with evidence that it would be hard to resolve.
\end{abstract}

\section{Introduction}
The Flint-Hills series is the series
$$\sum_{n=1}^\infty \frac{1}{n^3\sin^2(n)}$$
and the question of its convergence remains open (Pickover~\cite{Pickover2007}). Alekseyev~\cite{Alekseyev} showed that convergence would imply that $\mu(\pi) \le \frac{5}{2}$, where $\mu(\pi)$ is the irrationality measure of $\pi$. His argument is easily summarized: if $\mu(\pi)$ is too large, then multiples of $\pi$ are well approximated by integers, $\sin(n)$ becomes exceedingly small, and the individual terms of the sequence grow without bound. The converse is not immediate however, as a small $\mu(\pi)$ leads to a sequence whose terms converge to zero but summed series may still grow without bound.
\par Our main result is a near-complete converse, that if $\mu(\pi) < \frac{5}{2}$, then the Flint-Hills series converges. This requires inspecting not only the size of the terms, but how {\em frequently} good approximations of $\pi$ can occur. The associated results concerning the density of good approximations are elementary to derive and likely known, but the author could not find reference to them in prior literature. They are similar to the bounds in Korhonen~\cite{Korhonen}, where it is shown that the denominators of quadratically good convergents must grow exponentially. Our results refine this in terms of different exponents on the denominator and the irrationality measure. This allows us to establish that the close approximations to $\pi$ are sparse enough that the series must converge.
\par It is not a complete converse, as the edge case $\mu(\pi) = \frac{5}{2}$ is still consistent with the Flint-Hills series converging or not. We provide evidence that this case cannot be resolved through analysis of the irrationality measure alone: we believe there are two numbers $\alpha_1$, $\alpha_2$ with both $\mu(\alpha_i) = \frac{5}{2}$ such that
$$\sum_{n=1}^\infty \frac{1}{n^3\sin^2(n \alpha_i / \pi)}$$
converges for $i=1$ and diverges for $i=2$. In other words, if we replace the period $\pi$ with a different irrational number, that the series can converge or diverge, and the irrationality measure of the period is not enough information to determine which. We prove in Theorem \ref{thm:canDiv} that at least $\alpha_2$ exists. But, in the likely event that $\mu(\pi)$ is eventually bounded below $\frac{5}{2}$, this edge case would become irrelevant, and together with our result would imply that the Flint-Hill series converges.

\section{Techniques}
We begin by reviewing the definition of irrationality measure:
\begin{definition}[Irrationality measure]
Given $\alpha\in\mathbb{R}$, the irrationality measure $\mu(\alpha)$ is
$$\mu(\alpha) = \inf_{\mu_0 \in \mathbb{R}^+} \Bigg[ 0 < \left| \alpha-\frac{p}{q}\right|<\frac{1}{q^{\mu_0}} \textrm{  has infinitely many solutions  } (p,q). \Bigg]$$
\end{definition}

Consequently, for any exponent $s > \mu(\alpha)$, there will be only finitely many solutions to $|\alpha - p/q| < 1/q^s$. To a rational expression $(p,q)$ approximating a real $\alpha$, we associate its {\em approximation exponent} $s$,
$$s = -\log_q(p/q - \alpha),$$
and to each integer $q$ the minimum approximation exponent across any $(p,q)$.
\par For a small $\epsilon > 0$, there will be infinitely many approximations with $s > \mu(\alpha) - \epsilon$, but we expect these to occur rarely. We call these approximations {\em good}:

\begin{definition}[$\epsilon$-good approximation]
Given $p,q\in\mathbb{Z}$ and $\alpha\in\mathbb{R}$, we say the approximation $\frac{p}{q}$ is an $\epsilon$-good approximation of $\alpha$ if
$$\left | \alpha-\frac{p}{q}\right|<\frac{1}{q^{\mu(\alpha)-\epsilon}}.$$
\end{definition}

Sometimes we will simply refer to $q$ as an approximation, in which case we are implicitly taking $p$ to be the integer minimizing $|\alpha - p/q|$. This leads to the related notion of the {\em approximation exponent} of each integer $q$:

\begin{definition}[Approximation exponent]
The approximation exponent of a denominator $q$ relative to an irrational $\alpha$ is
\begin{equation}
    r(q) = -\log_q\left( \min_p \left|\frac{1}{\alpha}-\frac{p}{q}\right| \right) = 1-\log_q\left( \frac{1}{\alpha} \min_p \left|q-\alpha p\right| \right)
\end{equation}
\end{definition}

Our first goal will be to show that good approximations do not cluster close together, in a particular sense:

\begin{definition}[$\delta$-close approximations]
Given $q_1,q_2\in\mathbb{Z}$, $q_1<q_2$, we say that $(q_1, q_2)$ are $\delta$-close if
$$q_2-q_1<q_1^\delta.$$
\end{definition}

The first result is a bound on the density of $\epsilon$-approximations, that is, approximations that come close to the irrationality measure $\mu(\alpha)$. They cannot occur with high density, or conversely, the sequence of $q$ with exponent at least $\mu(\alpha)-\epsilon$ must grow quickly.
\begin{thm}[Good approximations are rare]\label{thm:a}
Suppose that positive reals $\alpha$, $\epsilon_1$, and $\epsilon_2$ satisfy
$$0 < \epsilon_2 < 1, \quad \mu(\alpha) > 1 + \frac{\epsilon_1}{1-\epsilon_2}.$$
Let $Q=\left\{q\in\mathbb{Z}^+|\exists p\in\mathbb{Z}^+: 0<\left | \alpha-\frac{p}{q}\right |<\frac{1}{q^{\mu(\alpha)-\epsilon_1}}\right\}$ be the sequence of $\epsilon_1$-good approximations to $\alpha$. Then $Q_n$, the $n$th element of $Q$, grows as $\Omega\left(n^{\frac{1}{1-\epsilon_2}}\right)$ with a constant that depends only on $\alpha$ and $\epsilon_1$.
\end{thm}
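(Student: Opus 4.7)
The strategy is to reduce the density bound to an additive spacing estimate on consecutive elements of $Q$: namely, that for $q \in Q$ sufficiently large, the next element $q' \in Q$ satisfies $q' - q \geq q^{\epsilon_2}$, i.e.\ that $\epsilon_2$-close pairs of $\epsilon_1$-good approximations do not occur past some threshold. Given this spacing, a standard calculus step (applying the mean value theorem to $q^{1-\epsilon_2}$ along the sequence) converts the additive recurrence $q_{n+1} - q_n \geq q_n^{\epsilon_2}$ into $q_n = \Omega(n^{1/(1-\epsilon_2)})$, with the implied constant depending only on $\alpha$ and $\epsilon_1$ through the threshold.

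I would establish the spacing by contradiction. Set $s := \mu(\alpha) - \epsilon_1$ and suppose there are infinitely many pairs $q < q'$ in $Q$ with $q' - q < q^{\epsilon_2}$ and $q \to \infty$. Let $q'' := q' - q$ and $p'' := p' - p$, where $p, p'$ witness $q, q' \in Q$. The triangle inequality gives
\[
|\alpha q'' - p''| \;\leq\; |\alpha q - p| + |\alpha q' - p'| \;<\; \tfrac{2}{q^{s-1}}.
\]
Since $q'' < q^{\epsilon_2}$ implies $q > (q'')^{1/\epsilon_2}$, this upgrades to $|\alpha - p''/q''| < 2/(q'')^{(s-1)/\epsilon_2 + 1}$. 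A direct algebraic check confirms that the hypothesis $\mu > 1 + \epsilon_1/(1-\epsilon_2)$ is \emph{equivalent} to $(s-1)/\epsilon_2 + 1 > \mu$; denote the excess $\eta := (s-1)/\epsilon_2 + 1 - \mu > 0$. So each such $q''$ furnishes an approximation of $\alpha$ whose exponent strictly exceeds $\mu(\alpha)$ by the fixed margin $\eta$.

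To extract the contradiction I would apply a pigeonhole dichotomy to the infinite sequence of positive integers $q''$. If infinitely many of the $q''$ are distinct, then for $q''$ large enough the factor $2$ in the numerator is absorbed by a portion of the $q''{}^\eta$ gap, producing infinitely many rational approximations with $|\alpha - p''/q''| < 1/(q'')^{\mu + \eta/2}$, contradicting the definition of $\mu(\alpha)$ as the infimum of exponents admitting only finitely many solutions. If instead some value $q^\ast$ recurs infinitely often along the subsequence, then $|\alpha q^\ast - p^\ast| \leq 2/q^{s-1} \to 0$ as $q \to \infty$, forcing $\alpha q^\ast \in \mathbb{Z}$ and hence $\alpha \in \mathbb{Q}$, which is incompatible with $\mu(\alpha)$ being a finite real greater than $1$.

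The main obstacle will be the bookkeeping in the pigeonhole step: verifying that the margin $\eta$ really does absorb the constant $2$ (so the definition of $\mu$ may be invoked cleanly in the distinct case) and that the integer $p^\ast$ witnessing $q^\ast$ is consistent along the recurring subsequence (one can take $p^\ast := p' - p$ differing per pair, but its magnitude is controlled by $\alpha q^\ast$ up to an error tending to zero, so the same nearest integer is forced eventually). Everything else---the equivalence between the hypothesis and $(s-1) > \epsilon_2(\mu - 1)$, and the integration step turning additive spacing into the polynomial growth $n^{1/(1-\epsilon_2)}$---is routine.
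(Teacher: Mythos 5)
Your proposal is correct, and it shares the core arithmetic step with the paper's proof but then takes a genuinely different route to the growth bound. Both proofs take an $\epsilon_2$-close pair of $\epsilon_1$-good approximations, form $(p'',q'')=(p'-p,\,q'-q)$, and use the triangle inequality plus $q>(q'')^{1/\epsilon_2}$ to show $|\alpha-p''/q''|<2/(q'')^{1+(s-1)/\epsilon_2}$, with $1+(s-1)/\epsilon_2>\mu(\alpha)$ exactly under the hypothesis $\mu(\alpha)>1+\epsilon_1/(1-\epsilon_2)$. The paper, however, is explicitly cautious at this point: it observes that this only bounds the number of \emph{distinct} difference pairs $(p'',q'')$, and that a single difference could in principle recur across infinitely many close source pairs; so it extracts the largest realizable gap $Q_{\max}$ and settles for the weaker density statement that each window $(q,q+q^{\epsilon_2}]$ contains at most $Q_{\max}$ elements of $Q$. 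You instead close the loop that the paper chose to route around: if some $q^{\ast}=q''$ recurs along pairs with $q\to\infty$, then $|\alpha q^{\ast}-p''|<2/q^{s-1}\to 0$, so the integer $p''$ is eventually pinned to the nearest integer $p^{\ast}$ to $\alpha q^{\ast}$ and the distance from $\alpha q^{\ast}$ to $\mathbb{Z}$ is forced to be zero, contradicting irrationality (note $\mu(\alpha)>1$ already rules out rational $\alpha$). Combined with the ``infinitely many distinct $q''$'' branch, where the fixed margin $\eta$ absorbs the constant $2$ once $q''$ is large, you obtain the \emph{stronger} conclusion that past a finite threshold no $\epsilon_2$-close pairs in $Q$ exist at all, i.e.\ consecutive elements satisfy $q'-q\ge q^{\epsilon_2}$. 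Either intermediate statement feeds into the same telescoping-on-$q^{1-\epsilon_2}$ step (you should be slightly careful there, since the mean value point sits at the larger endpoint; a two-case split on whether $q_{n+1}\le 2q_n$ makes the constant lower bound clean), yielding $Q_n=\Omega\bigl(n^{1/(1-\epsilon_2)}\bigr)$. Your version is arguably tidier and proves a sharper spacing fact, while the paper's version avoids the irrationality case analysis at the cost of a looser but still adequate per-interval count.
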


This bound will allow us to control the contribution of these terms in the Flint-Hill series. There are only a few essential properties of the $|\sin(x)|$ function that we need. We can discuss a generalized Flint-Hill series, built from a function $\mathcal{P}:\mathbb{R}^+\to\mathbb{R}^+$ with the properties (given an irrational period $\alpha$, and bounds $B_1$ and $B_2$):
\begin{enumerate}
  \item $\mathcal{P}(x)=\mathcal{P}(x+n\alpha)$, for all $x\in\mathbb{R}^+, n\in\mathbb{N}$
  \item If $|x|\le \alpha/2$, then $B_1x \le \mathcal{P}(x) \le B_2x$.
\end{enumerate}
We call a function with these two properties ``sine-like``. Then given exponents $u$ and $v$, the sequence is given by $\mathcal{A}_{u,v}(n) = n^{-u}\mathcal{P}(n)^{-v}$, and its partial sums the series $\mathcal{S}_{u,v}(n) = \sum^{n}_{i=1}\mathcal{A}_{u,v}(i)$. The Flint-Hills series fits this form with $\alpha=\pi$, $B_1=\frac{1}{2}$, $B_2 = 1$, $\mathcal{P}=|\sin|$, $u=3$, and $v=2$. \cite{Alekseyev} gave necessary conditions for the convergence of $\mathcal{S}$:

\begin{thm}[\cite{Alekseyev}1, Corollary 3]
For $u,v>0$, if $A_{u,v}$ converges, then $\mu(\alpha) \le 1 + u/v$. If $\mathcal{A}_{u,v}$ diverges, then $\mu(\alpha) \ge 1 + u/v$.
\end{thm}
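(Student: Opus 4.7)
The plan is to treat the two halves of the theorem as contrapositives of a single dichotomy on the size of the terms $\mathcal{A}_{u,v}(n)$. I will show that $\mu(\alpha) > 1 + u/v$ forces $\mathcal{A}_{u,v}(n) \to \infty$ along a subsequence (so the series cannot converge and the terms are in fact unbounded), while $\mu(\alpha) < 1 + u/v$ forces $\mathcal{A}_{u,v}(n) \to 0$ (so if the terms fail to vanish, we must have $\mu(\alpha) \ge 1 + u/v$). The key bridge is that for each positive integer $n$, choosing $m$ to be the nearest integer to $n/\alpha$, periodicity gives $\mathcal{P}(n) = \mathcal{P}(n - m\alpha)$, the shift $|n - m\alpha|$ is automatically at most $\alpha/2$, and the sine-like bound then sandwiches
\begin{equation}
    B_1 |n - m\alpha| \;\le\; \mathcal{P}(n) \;\le\; B_2 |n - m\alpha|.
\end{equation}
Combined with the identity $|n - m\alpha| = n\alpha\,|1/\alpha - m/n|$, this reduces the problem entirely to the quality of rational approximations to $1/\alpha$, whose irrationality measure equals $\mu(\alpha)$.

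For the first implication I will pick $s$ with $1 + u/v < s < \mu(\alpha)$. By definition of $\mu$, infinitely many $(m,n)$ satisfy $|1/\alpha - m/n| < 1/n^s$, hence $|n - m\alpha| < \alpha/n^{s-1}$, and so $\mathcal{P}(n) < B_2\alpha/n^{s-1}$. Substituting gives
\begin{equation}
    \mathcal{A}_{u,v}(n) \;>\; (B_2\alpha)^{-v}\, n^{v(s-1)-u},
\end{equation}
and since $v(s-1)-u > 0$ the terms blow up along this subsequence. For the reverse direction, if $\mu(\alpha) < 1 + u/v$ I will pick $s$ strictly between them; only finitely many $(m,n)$ achieve $|1/\alpha - m/n| < 1/n^s$, so for $n$ sufficiently large the nearest-integer $m$ obeys $|n - m\alpha| \ge \alpha/n^{s-1}$. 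Hence $\mathcal{P}(n) \ge B_1\alpha/n^{s-1}$, and
\begin{equation}
    \mathcal{A}_{u,v}(n) \;\le\; (B_1\alpha)^{-v}\, n^{v(s-1)-u} \;\to\; 0,
\end{equation}
because the exponent is now negative.

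I do not anticipate a serious obstacle. The only two items that need checking are the identity $\mu(1/\alpha) = \mu(\alpha)$ (which follows from the algebraic relation $|1/\alpha - m/n| = |n - m\alpha|/(n\alpha)$ together with the fact that reduction of non-coprime pairs cannot weaken approximation exponents), and the fact that $|n - m\alpha| \le \alpha/2$ for the chosen $m$, which is immediate from the nearest-integer definition. Both are cosmetic rather than substantive; the genuine work of the paper will come in the sharper density statements about how often such good approximations can occur, which are precisely what the crude equivalence above does \emph{not} capture.
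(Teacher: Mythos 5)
The paper does not reproduce a proof of this statement; it is cited verbatim from Alekseyev's work, so there is no in-paper argument to compare against. Your reconstruction is correct and matches the one-line summary the introduction attributes to Alekseyev: periodicity plus the sine-like sandwich convert $\mathcal{P}(n)$ into $\Theta(|n - m\alpha|)$ where $m$ is the nearest integer to $n/\alpha$, and then each direction follows by choosing an exponent $s$ strictly between $\mu(\alpha)$ and $1+u/v$ and reading off the sign of $v(s-1)-u$. One step you leave implicit but should state is the passage from ``finitely many pairs $(m,n)$ with $|1/\alpha - m/n| < 1/n^s$'' to ``finitely many $n$'': once $n$ is large enough that $n^{s-1}\ge 2$, the inequality $|1/\alpha - m/n|<1/n^s$ forces $m$ to be the nearest integer to $n/\alpha$, so past that threshold the pairs are in bijection with their second coordinates and finiteness transfers. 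You also inherit the paper's own small abuse in writing $\mathcal{P}(n)=\mathcal{P}(n-m\alpha)$ when $n-m\alpha$ may be negative and $\mathcal{P}$ is only defined on $\mathbb{R}^+$; this is harmless (take the representative in $[0,\alpha)$ and use the reflection implicit in the $|x|\le\alpha/2$ hypothesis), and the paper commits it in its own proofs, so it is not a defect of your argument specifically.
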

\begin{thm}[\cite{Alekseyev}, Corollary 4]
For $u,v>0$, if $\mathcal{S}_{u,v}$ converges, then $\mu(\alpha) \le 1 + u/v$.
\end{thm}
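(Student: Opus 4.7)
The plan is to bootstrap directly off of Corollary 3, since almost all of the analytical content required is already contained there. The key observation is that convergence of an infinite series implies convergence of its terms to zero: if $\mathcal{S}_{u,v}(n) = \sum_{i=1}^n \mathcal{A}_{u,v}(i)$ tends to a finite limit, then $\mathcal{A}_{u,v}(n) = \mathcal{S}_{u,v}(n)-\mathcal{S}_{u,v}(n-1) \to 0$. In particular, the sequence $\mathcal{A}_{u,v}$ converges (to zero). Corollary 3 then applies verbatim, yielding $\mu(\alpha) \le 1 + u/v$.

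So the proof is a one-line reduction, and there is really no obstacle once Corollary 3 is in hand. For a self-contained argument that does not lean on Corollary 3, I would instead argue the contrapositive directly: assuming $\mu(\alpha) > 1 + u/v$, pick $\delta > 0$ with $\mu(\alpha) > 1 + u/v + \delta$ and extract infinitely many approximations $p/q$ to $\alpha$ with $|\alpha - p/q| < q^{-(1+u/v+\delta)}$. For each such $q$, write $q = k\alpha + r$ with $|r| \le \alpha/2$, so that $|r| = |q - k\alpha| \le \alpha |q\alpha^{-1}\cdot\text{sign} - p| \cdot (\ldots)$; more cleanly, one uses the bound $|q - p\alpha| \le \alpha \cdot q \cdot |p/q - \alpha|^{?}$ together with periodicity to obtain $\mathcal{P}(q) \le B_2\,\alpha\, q^{-(u/v+\delta)}$. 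Substituting into $\mathcal{A}_{u,v}(q) = q^{-u}\mathcal{P}(q)^{-v}$ gives
\[
\mathcal{A}_{u,v}(q) \ge (B_2\alpha)^{-v}\, q^{v\delta},
\]
which grows without bound along the subsequence of approximating $q$'s. Thus $\mathcal{A}_{u,v}$ is unbounded, so its partial sums $\mathcal{S}_{u,v}$ cannot converge.

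Of these two approaches, the first is preferable since the work is already done. The only thing one needs to double-check is that ``$\mathcal{A}_{u,v}$ converges'' in the statement of Corollary 3 means ``converges as a sequence of real numbers'' (equivalently, to $0$), as opposed to some other notion of convergence; the parallel with the dichotomy ``$\mathcal{A}_{u,v}$ diverges'' in the second half of Corollary 3 makes this reading unambiguous. The main (and only) subtlety in the whole reduction is therefore a bookkeeping matter of matching the hypothesis of Corollary 3 to the consequence of series convergence, and once this is noted the result follows.
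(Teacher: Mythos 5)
Your first (preferred) reduction is exactly the paper's argument: the paper remarks after the statement that ``for $S$ to converge, $A$ must converge,'' i.e., convergence of the partial sums forces $\mathcal{A}_{u,v}(n)\to 0$, and then Corollary~3 gives the bound. Your second, self-contained contrapositive sketch is the right idea too, but it is not needed and your final write-up should simply use the one-line reduction.
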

as for $S$ to converge, $A$ must converge. The only sufficient condition given is the comparatively weak
\begin{thm}[\cite{Alekseyev}, Theorem 5]
For $u,v>0$, if $\mu(\alpha) < 1 + (u-1)/v$, then $\mathcal{S}_{u,v}$ converges.
\end{thm}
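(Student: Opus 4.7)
The plan is to translate the irrationality-measure hypothesis into a pointwise polynomial lower bound on $\mathcal{P}(n)$, which then lets me dominate $\mathcal{A}_{u,v}(n)$ term-by-term by a convergent $p$-series. First I would pick an auxiliary exponent $s$ with $\mu(\alpha) < s < 1 + (u-1)/v$; this is possible by hypothesis. By the definition of the irrationality measure given in the paper, only finitely many integer pairs $(p,k)$ with $k \ge 1$ satisfy $|\alpha - p/k| < k^{-s}$.

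Next, for each $n \in \mathbb{Z}^+$ large enough that $n > \alpha/2$, let $k_n$ denote the positive integer minimizing $|n - k\alpha|$; then $|n - k_n\alpha| \le \alpha/2$ and $k_n = n/\alpha + O(1) = \Theta(n)$. The sine-like hypothesis gives $\mathcal{P}(n) \ge B_1 |n - k_n\alpha|$. The key observation is that $|n - k_n\alpha| < k_n^{1-s}$ is equivalent to $|\alpha - n/k_n| < k_n^{-s}$, which places $(n, k_n)$ in the finite exceptional set from the previous step. Thus for all $n$ beyond some threshold, $\mathcal{P}(n) \ge B_1 k_n^{1-s}$, and combining with $k_n = \Theta(n)$ yields $\mathcal{P}(n) \ge C_1 n^{1-s}$ for some positive constant $C_1$ (noting $1 - s < 0$).

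Substituting back gives $\mathcal{A}_{u,v}(n) \le C_1^{-v}\, n^{v(s-1) - u}$ for all sufficiently large $n$. By the choice of $s$, the exponent $v(s-1) - u$ is strictly less than $-1$, so the tail is dominated by a convergent $p$-series and $\mathcal{S}_{u,v}$ converges. The finitely many exceptional terms contribute only a finite amount because $\mathcal{P}(n) > 0$ for every positive integer $n$, which follows from the irrationality of $\alpha$.

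I do not anticipate a substantive obstacle here; the argument is a direct conversion of the definition of $\mu(\alpha)$ into a term-size bound. The only minor care is tracking constants when replacing $k_n$ by $n$, handling the small-$n$ regime where $k_n$ might not exist as a positive integer (absorbed into the finitely many exceptional terms), and observing that the paper's definition of $\mu(\alpha)$ does not require $\gcd(p,q) = 1$, so the pair $(n, k_n)$ qualifies regardless of any common factor.
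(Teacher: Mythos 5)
Your proof is correct. The paper cites this result from Alekseyev without giving its own proof, so there is no internal argument to compare against; your derivation is the natural one, converting the definition of $\mu(\alpha)$ directly into a pointwise polynomial lower bound $\mathcal{P}(n) \gtrsim n^{1-s}$ and then comparing term-by-term with a $p$-series. It is worth noting that your argument is effectively the special case of the paper's analysis in Fact~\ref{thm:weak} where the ``bad approximation'' set $S_3 = \{q : r(q) < \mu(\alpha) - x\}$ is made to cover all but finitely many integers; that is only possible when $x$ can be taken small enough that the crude bound $u + v - v(\mu(\alpha)-x) > 1$ still holds with $S_3$ cofinite, i.e.\ exactly when $\mu(\alpha) < 1 + (u-1)/v$. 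The paper's Fact~\ref{thm:weak} and Theorem~\ref{thm:main} improve on this precisely by \emph{not} treating all non-exceptional $q$ uniformly, instead exploiting Theorem~\ref{thm:a} to show the better-approximating $q$ are sparse. Two minor points in your write-up: the equivalence $|n - k_n\alpha| < k_n^{1-s} \iff |\alpha - n/k_n| < k_n^{-s}$ is correct (divide by $k_n > 0$), and your observation about coprimality is sound but not strictly necessary --- even under the coprime convention, a fixed reduced pair $(p_0,q_0)$ could only absorb finitely many $n$ (else $\alpha = p_0/q_0$ would be rational), so the exceptional set of $n$ remains finite either way.
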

which in the case of the Flint-Hill series (where $u=3$, $v=2$), gives an (unfulfilled) sufficient condition that $\mu(\pi) < 2$. Using Theorem \ref{thm:a}, we will perform a more careful analysis and give a tight sufficient condition.

\begin{thm}[Main result]\label{thm:main}
For any sine-like function $\mathcal{P}$ with irrational period $\alpha$, constant $v \ge 1$, and $\mu(\alpha) < 1 + \frac{u}{v}$, the series $\mathcal{S}_{u,v}(n)$ converges.
\end{thm}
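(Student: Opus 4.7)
The plan is to bound each term of the series by its worst-case size as a function of the approximation exponent $r(q)$, partition the sum into bins based on this exponent, and apply Theorem~\ref{thm:a} inside each bin to control how many $q$'s contribute.

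First I would derive the pointwise bound $\mathcal{A}_{u,v}(q)\le(B_1\alpha)^{-v}\,q^{v(r(q)-1)-u}$. Fix $q$, let $k$ be the nearest integer to $q/\alpha$, and set $\delta=q-k\alpha$ so $|\delta|\le\alpha/2$. Periodicity and property~2 of sine-like functions give $\mathcal{P}(q)=\mathcal{P}(\delta)\ge B_1|\delta|$, while the definition of $r(q)$ (applied to $1/\alpha$, using $\mu(\alpha)=\mu(1/\alpha)$) gives $|\delta|=\alpha q\,|1/\alpha-k/q|=\alpha q^{1-r(q)}$. Substituting into $\mathcal{A}_{u,v}(q)=q^{-u}\mathcal{P}(q)^{-v}$ yields the bound.

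Next I would fix small positive parameters $\delta,\epsilon$ (to be constrained later) and partition the integers. Since $r(q)\ge\mu(\alpha)+\epsilon$ for only finitely many $q$, their contribution is bounded; the remaining $q$'s are distributed into bins $B_j=\{q:r(q)\in[s_{j-1},s_j)\}$ where $s_j=1+j\delta$ for $j=1,\dots,J$ with $s_J\ge\mu(\alpha)+\epsilon$. For $j\ge 2$ (so $s_{j-1}>1$), I would apply Theorem~\ref{thm:a} to $1/\alpha$ with $\epsilon_1=\mu(\alpha)-s_{j-1}$ and $\epsilon_2$ just below its allowed bound $(s_{j-1}-1)/(\mu(\alpha)-1)$; this shows the $k$-th smallest element of $\{q:r(q)\ge s_{j-1}\}$ grows as a power of $k$ with exponent arbitrarily close to $(\mu(\alpha)-1)/(\mu(\alpha)-s_{j-1})$. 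Combined with $\mathcal{A}_{u,v}(q)\le C\,q^{v(s_j-1)-u}$ for $q\in B_j$, the per-bin sum is dominated by a convergent tail provided $v(s_j-1)+(\mu(\alpha)-s_{j-1})/(\mu(\alpha)-1)<u$. For $j=1$, Theorem~\ref{thm:a} degenerates but the trivial density bound $q^{(1)}_k\ge k$ recovers the same formal condition $v\delta+1<u$.

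Finally I would verify this per-bin condition uniformly. Writing $h(s)=v(s-1)+(\mu(\alpha)-s)/(\mu(\alpha)-1)$, this is linear on $[1,\mu(\alpha)]$ with $h(1)=1$ and $h(\mu(\alpha))=v(\mu(\alpha)-1)$; both endpoint values lie strictly below $u$, the first because Dirichlet's $\mu(\alpha)\ge 2$ together with the hypothesis $\mu(\alpha)<1+u/v$ and $v\ge 1$ forces $u>v\ge 1$, and the second by the hypothesis itself. By linearity $\max_{[1,\mu(\alpha)]}h<u$, so choosing $\delta,\epsilon$ small enough to guarantee $h(s_{j-1})+v\delta<u$ for every $j$ is straightforward, and summing the finitely many bin contributions bounds $\sum_q\mathcal{A}_{u,v}(q)$. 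The delicate step is the simultaneous tuning of $\delta$, $\epsilon$, and each $\epsilon_2$ — all of which must sit strictly below their respective thresholds — but the positive buffer $u-v(\mu(\alpha)-1)$ guaranteed by the hypothesis makes a consistent choice easy to organize.
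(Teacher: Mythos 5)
Your proposal is correct and takes essentially the same approach as the paper: bound $\mathcal{A}_{u,v}(q)$ via the approximation exponent $r(q)$, partition the integers into finitely many bins according to $r(q)$, and apply Theorem~\ref{thm:a} bin-by-bin with $\epsilon_1$ determined by the bin's left endpoint. The only differences are cosmetic: you use a uniform bin width $\delta$ and verify the per-bin condition $h(s)=v(s-1)+\frac{\mu(\alpha)-s}{\mu(\alpha)-1}<u$ by checking both endpoints of the linear function, whereas the paper works with the adaptive allowable gap $f(a)$ (equivalent to your criterion via $f(a)>0\iff h(a)<u$) and invokes the monotonicity $f'\le 0$ supplied by $v\ge 1$.
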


In order to show that the series $\sum^{\infty}_{q=1}\mathcal{A}_{u,v}(q)$ converges, we will partition the integers $q$ into different sets based on how accurately they can approximate $1/\alpha$. Before proving the general case, we will demonstrate it with a partitioning of the series into three sets, based on their approximations exponents. This will already give us a nontrivial sufficient condition on the convergence, but not the best condition possible. For the full proof in the next section, we will generalize the technique to finer partitions, and prove the full theorem.

\begin{fact}[Weaker main result]\label{thm:weak}
For any sine-like function $\mathcal{P}$ with irrational period $\alpha$, constant $v \ge 1$, and
\begin{equation}
    \mu(\alpha) < \frac{\sqrt{(u+3)(u-1)} + u-1}{2v} + 1.
\end{equation}
the series $\mathcal{S}_{u,v}(n)$ converges.
\end{fact}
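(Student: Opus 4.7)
The plan is to bound each term $\mathcal{A}_{u,v}(q)$ in terms of the approximation exponent $r(q)$, then split $\mathbb{Z}^+$ into three sets (a finite exceptional set, a ``generic'' set where $r(q)$ is small, and a ``good-approximation'' set where $r(q)$ is close to $\mu(\alpha)$), using Theorem~\ref{thm:a} to control the density of the latter.

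First I would derive the pointwise bound $\mathcal{A}_{u,v}(q) \le C\, q^{v r(q) - u - v}$: taking $p$ to be the nearest integer to $q/\alpha$, we have $|q - p\alpha| \le \alpha/2$, and by periodicity of $\mathcal{P}$ together with the sine-like property, $\mathcal{P}(q) = \mathcal{P}(q - p\alpha) \ge B_1 |q - p\alpha| = B_1 \alpha\, q^{1 - r(q)}$, so the bound follows with $C = (B_1 \alpha)^{-v}$.

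Next, fix a threshold $t_1$ just below $1 + (u-1)/v$, and partition $\mathbb{Z}^+ = S_0 \sqcup S_1 \sqcup S_2$, where $S_0$ is a finite exceptional set of small $q$ (a bounded contribution), $S_1 = \{q \notin S_0 : r(q) \le t_1\}$, and $S_2 = \{q \notin S_0 : r(q) > t_1\}$. On $S_1$, the pointwise bound becomes $\mathcal{A}_{u,v}(q) \le C\, q^{v t_1 - u - v}$ with exponent strictly less than $-1$, so the tail over $S_1$ is dominated by a convergent $p$-series. On $S_2$, the elements are precisely the $\epsilon_1$-good approximations to $1/\alpha$ with $\epsilon_1 = \mu(\alpha) - t_1$; applying Theorem~\ref{thm:a} to $1/\alpha$ (which has the same irrationality measure $\mu(\alpha)$), the sorted elements $Q_n$ satisfy $Q_n = \Omega(n^{1/(1-\epsilon_2)})$ for any $\epsilon_2 < (t_1 - 1)/(\mu(\alpha) - 1)$. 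Using $r(q) \le \mu(\alpha) + \epsilon'$ for cofinitely many $q$, the tail over $S_2$ is bounded by a constant times $\sum_n n^{(v(\mu(\alpha)+\epsilon') - u - v)/(1-\epsilon_2)}$, convergent iff the exponent is $< -1$, i.e.\ $v\mu(\alpha) < u + v - 1 + \epsilon_2 - v\epsilon'$.

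Pushing $\epsilon',\ \epsilon_2,$ and $t_1$ to their extreme admissible values and writing $w := v(\mu(\alpha) - 1)$, this constraint collapses to $w^2 < (u-1)(w+1)$; solving the quadratic gives $w < \frac{(u-1) + \sqrt{(u-1)(u+3)}}{2}$, exactly the hypothesis of the fact. The main obstacle is the $S_2$ contribution: the per-term bound $q^{v\mu(\alpha) - u - v}$ alone is not summable in the regime covered by this fact (this is precisely why Alekseyev's Theorem~5 falls short), so the density estimate from Theorem~\ref{thm:a} must do genuine work. One also needs the quick check that the hypothesis forces $\mu(\alpha) < 1 + u/v$, so that $Q_n^{v\mu(\alpha) - u - v}$ is decreasing in $Q_n$ and the lower bound on $Q_n$ really does bound the contribution from above.
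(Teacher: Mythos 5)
Your proposal is correct and, up to notation, is essentially the same argument as the paper's: the same pointwise bound $\mathcal{A}_{u,v}(q) \le C\,q^{vr(q)-u-v}$, the same three-way partition by approximation exponent (your $S_0, S_1, S_2$ correspond to the paper's $S_1, S_3, S_2$, with your threshold $t_1$ playing the role of $\mu(\alpha)-x$ and your $\epsilon'$ playing the role of $y$), the same invocation of Theorem~\ref{thm:a} with $\epsilon_1 = \mu(\alpha) - t_1$ and $\epsilon_2 < (t_1-1)/(\mu(\alpha)-1)$, and the same algebra collapsing to $w^2 < (u-1)(w+1)$ with $w = v(\mu(\alpha)-1)$. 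Your closing remark that the hypothesis forces $\mu(\alpha) < 1 + u/v$ (so that the exponent is negative and the lower bound on $Q_n$ genuinely controls the tail) is a worthwhile sanity check the paper leaves implicit.
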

\begin{proof}
Each term $\mathcal{A}_{u,v}(n)=\frac{1}{n^{u}\mathcal{P}(n)^{v}}$ in the series can be bounded based on its approximation exponent:
\begin{align}
\mathcal{P}(n)&=\mathcal{P}(n-\alpha m)\\
&\geq B_1 \min_m |n-\alpha m|\\
&= B_1 \alpha n^{1-r(n)}.
\end{align}
\begin{align}\label{eqn:ABound}
\mathcal{A}_{u,v}(n)&=\frac{1}{n^{u}\mathcal{P}(n)^{v}}\\
&\leq\frac{1}{(\alpha B_1)^v}\frac{1}{n^{u+v-vr(n)}}\\
&=\mathcal O\left(\frac{1}{n^{u+v-vr(n)}}\right)
\end{align}

with a constant that depends on $\mathcal{P}$ and $v$ but not $n$. Partition the integers into three sets based on their approximation exponents to $1/\alpha$, and two positive reals $x$ and $y$ that remain to be chosen:
$$S_1=\left\{q\in\mathbb{Z}^+\Big|r(q) \ge \mu(\alpha)+y\right\}$$
$$S_2=\left\{q\in\mathbb{Z}^+\Big|r(q)\in[\mu(\alpha)-x,\mu(\alpha)+y)\right\}$$
$$S_3=\left\{q\in\mathbb{Z}^+\Big|r(q)<\mu(\alpha)-x\right\}.$$
keeping in mind that $\mu(\alpha) = \mu(1/\alpha)$. The sum $\mathcal{S}$ is decomposed by the partition,
\begin{align}
\mathcal{S}_{u,v}&=\sum^{\infty}_{i=1}\mathcal{A}_{u,v}(i)\\
&=\sum_{q\in S_1}\mathcal{A}_{u,v}(q)+\sum_{q\in S_2}\mathcal{A}_{u,v}(q)+\sum_{q\in S_3}\mathcal{A}_{u,v}(q)\label{eqn:part3}.
\end{align}
As $y>0$, the set $S_1$ is finite, and the first term converges. As for the second term, each index $q$ obeys $r(q) < \mu(\alpha) + y$
so that
$$\sum_{q\in S_2}\mathcal{A}_{u,v}(q) \leq \frac{1}{(\alpha B_1)^v}\sum_{q\in S_2} \frac{1}{q^{u+v-vr(n)}} \leq \frac{1}{(\alpha B_1)^v}\sum_{q\in S_2} q^{v(\mu(\alpha)+y)-u-v}$$

Now we can apply Theorem \ref{thm:a} to $1/\alpha$, $\epsilon_1 = x$, and with and some (as-of-yet undetermined) value of $\epsilon_2 < 1 - \frac{x}{\mu(\alpha) -  1}$. Then the sequence $q \in S_2$ grows at least as quickly as $\Omega(n^{1/(1-\epsilon_2)})$, and the second sum is bounded by

\begin{align*}
\sum_{j\in S_2}\mathcal{A}_{u,v}(j) &\leq \frac{1}{(\alpha B_1)^v}\sum_{q\in S_2} q^{v(\mu(\alpha)+y)-u-v}\\
&= \frac{1}{(\alpha B_1)^v}\sum_{n=1}^{\infty}\Omega\left(n^\frac{1}{1-\epsilon_2}\right)^{v(\mu(\alpha)+y)-u-v}\\
&\le C\sum_{n=1}^{\infty}n^{\frac{v(\mu(\alpha)+y-1)-u}{1-\epsilon_2}}
\end{align*}
which converges provided that $\frac{v(\mu(\alpha)+y-1)-u}{1-\epsilon_2} < -1$, or that $\epsilon_2 > 1-u-v(1-\mu(\alpha)-y)$. Thus we want to choose $\epsilon_2$ such that
$$1 - \frac{x}{\mu(\alpha) -  1} > \epsilon_2 > 1-u-v(1-\mu(\alpha)-y)$$
which is possible exactly when
$$1 - \frac{x}{\mu(\alpha) -  1} > 1-u-v(1-\mu(\alpha)-y)$$
\begin{equation}\label{eqn:e3cond}
\iff x < (u+v(1-\mu(\alpha)-y))(\mu(\alpha)-1).\end{equation}
Under this conditions, we can guarantee the second term in Eqn \ref{eqn:part3} will converge.

This leaves the third term of Eqn \ref{eqn:part3}. We again apply the bound on $\mathcal{A}$ from Eqn \ref{eqn:ABound}, but this time with $r(q) < \mu(\alpha)-x$:
\begin{align*}
\sum_{q\in S_3}\mathcal{A}_{u,v}(q) &=\sum_{q\in S_3} \frac{C}{q^{u+v-vr(q)}}\\
&\le \sum_{q\in S_3}\frac{C}{q^{u+v-v(\mu(\alpha)-x)}}\\
&\le \sum_{q\in \mathbb{N}} \frac{1}{q^{u+v-v(\mu(\alpha)-x)}}.\\
\end{align*}
This sum converges when $u+v-v(\mu(\alpha)-x) > 1$, or when $x > \mu(\alpha) + \frac{1-u}{v} - 1$. This means the series $\mathcal{S}$ must converge whenever we have a simultaneous solution to the four inequalities,
\begin{eqnarray*}
x > 0, \quad y>0\\
x < (u+v(1-\mu(\alpha)-y))(\mu(\alpha)-1)\\
x > \mu(\alpha) + \frac{1-u}{v} - 1
\end{eqnarray*}
Simple algebraic manipulation shows this has a solution exactly when
\begin{equation}
    2 \le \mu(\alpha) < \frac{\sqrt{(u+3)(u-1)} + u-1}{2v} + 1.
\end{equation}
\end{proof}

In the Flint-Hills case where $u=3$, $v=2$, this means the sum converges whenever $\mu(\pi) < \frac{3+\sqrt{3}}{2} \approxeq 2.366$. This result required partitioning the integers into a set $S_1$ (finitely many $q$'s that are ``too good" of an approximation, with $r(q)$ bounded away $\mu(\alpha)$ from above), a set $S_3$ ($q$'s that are bad approximations, $r(q)$ at least $\frac{1-u-v}{v}$ less than $\mu(\alpha)$, and can't diverge no matter how dense they are), and an intermediate set $S_2$ that produce large terms in $\mathcal{A}$ but infrequently. We would like to tighten this to match Alekseyev's bound of $\mu(\alpha) = 1+u/v$, and this requires refining the partition $S_2$, so that terms with better approximations are appropriately less frequent. In the next section, we prove Theorem~\ref{thm:a}, and apply the above approach to finer partitions.

\section{Proofs}

We now prove that good approximations are not frequently close.

\begin{manualtheorem}{\ref{thm:a}}[Good approximations are rare]
Suppose that positive reals $\alpha$, $\epsilon_1$, and $\epsilon_2$ satisfy
$$0 < \epsilon_2 < 1, \quad \mu(\alpha) > 1 + \frac{\epsilon_1}{1-\epsilon_2}.$$
Let $Q=\left\{q\in\mathbb{Z}^+|\exists p\in\mathbb{Z}^+: 0<\left | \alpha-\frac{p}{q}\right |<\frac{1}{q^{\mu(\alpha)-\epsilon_1}}\right\}$ be the sequence of $\epsilon_1$-good approximations to $\alpha$. Then $Q_n$, the $n$th element of $Q$, grows as $\Omega\left(n^{\frac{1}{1-\epsilon_2}}\right)$ with a constant that depends only on $\alpha$ and $\epsilon_1$.
\end{manualtheorem}
\begin{proof}
Take two $\epsilon_1$-good approximations to $\alpha$, $(p_1,q_1)$ and $(p_2,q_2)$, where $q_1$ and $q_2$ are $\epsilon_2$-close. Let $A,B$ be the respective errors of the approximations:
$$A=\frac{p_1}{q_1}-\alpha$$
$$B=\frac{p_2}{q_2}-\alpha$$
Then we can rearrange:
\begin{align*}
q_2B-q_1A&=(p_2-\alpha q_2)-(p_1-\alpha q_1)\\
&=(p_2-p_1)-(q_2-q_1)\alpha
\end{align*}
$$\implies \frac{p_2-p_1}{q_2-q_1}-\alpha=\frac{q_2B-q_1A}{q_2-q_1}.$$
Since $A$ and $B$ are both small, this suggests that $\frac{p_2-p_1}{q_2-q_1}$ could be a good approximation to $\alpha$ as well. Both of the initial approximations to $\alpha$ are $\epsilon_1$-good, so
$$\left|q_1A\right|=q_1|A|<\frac{q_1}{q_1^{\mu(\alpha)-\epsilon_1}}=\frac{1}{q_1^{\mu(\alpha)-\epsilon_1-1}},$$
$$\left|q_2B\right|=q_2|B|<\frac{q_2}{q_2^{\mu(\alpha)-\epsilon_1}}<\frac{1}{q_1^{\mu(\alpha)-\epsilon_1-1}}.$$
Then the error $E$ of the new approximation is bounded,
\begin{align*}
E=\frac{\left|q_2B-q_1A\right|}{q_2-q_1}&\leq\frac{2q_1^{-\mu(\alpha)+\epsilon_1+1}}{q_2-q_1},
\end{align*}
so that it is an approximation with exponent at least as high as $-\log_{(q_2-q_1)}\left(\frac{2q_1^{-\mu(\alpha)+\epsilon_1+1}}{q_2-q_1}\right)$. This is then bounded below by
\begin{align*}
-\log_{(q_2-q_1)}\left(\frac{2q_1^{-\mu(\alpha)+\epsilon_1+1}}{q_2-q_1}\right)
&=1-\frac{(-\mu(\alpha)+\epsilon_1+1)\log\left(q_1\right)+\log\left(2\right)}{\log(q_2-q_1)}\\
&>1+\frac{(\mu(\alpha)-\epsilon_1-1)\log\left(q_1\right)-\log\left(2\right)}{\log(q_1^{\epsilon_2})}\\
&=1+\frac{\mu(\alpha)-1-\epsilon_1}{\epsilon_2}-\frac{1}{\epsilon_2 \log_2(q_1)}\\
\end{align*}

This exponent will exceed $\mu(\alpha)$ when:
\begin{align*}
\mu(\alpha)&<1+\frac{\mu(\alpha)-1-\epsilon_1}{\epsilon_2}-\frac{1}{\epsilon_2 \log_2(q_1)}\\
\iff\mu(\alpha)\left(1-\frac{1}{\epsilon_2}\right)&<1-\frac{1+\epsilon_1}{\epsilon_2}-\frac{1}{\epsilon_2 \log_2(q_1)}\\
\iff\mu(\alpha)&>\left(\frac{-\epsilon_2}{1-\epsilon_2}\right)\left(1-\frac{1+\epsilon_1}{\epsilon_2}-\frac{1}{\epsilon_2 \log_2(q_1)}\right)\\
 &=1+\frac{\epsilon_1}{1-\epsilon_2}+\frac{1}{(1-\epsilon_2) \log_2(q_1)}
\end{align*}

This will hold for sufficiently large $q_1$, given that $\epsilon_2 < 1$ (so that third term becomes arbitrarily small), and given that
\begin{equation}\label{eq:epsCond}
    \mu(\alpha) > 1 + \frac{\epsilon_1}{1-\epsilon_2}.
\end{equation}
These two conditions are the hypotheses of the theorem. There is some finite $q_0$ such that
$$\mu(\alpha) > 1+\frac{\epsilon_1}{1-\epsilon_2}+\frac{1}{(1-\epsilon_2) \log_2(q_0)}.$$
Denote the slack in this inequality by
$$L = \mu(\alpha) - \left(1+\frac{\epsilon_1}{1-\epsilon_2}+\frac{1}{(1-\epsilon_2) \log_2(q_0)}\right) > 0.$$
So, let's suppose we are given a pair of $\epsilon_1$-good, $\epsilon_2$-close approximations $(p_1, q_1)$ and $(p_2,q_2)$ with $q_2 > q_1 > q_0$. From this, we can construct a new approximation $\frac{p_2 - p_1}{q_1 - q_1}$ with an approximation exponent of at least
$$1+\frac{\mu(\alpha)-1-\epsilon_1}{\epsilon_2}-\frac{1}{\epsilon_2 \log_2(q)} > 1+\frac{\mu(\alpha)-1-\epsilon_1}{\epsilon_2}-\frac{1}{\epsilon_2 \log_2(q_0)} $$
$$= \mu(\alpha) + \frac{L(1-\epsilon_2)}{\epsilon_2} > \mu(\alpha).$$
Since this is a value strictly above $\mu(\alpha)$, there can only be finitely many distinct approximations of the form $(p_2-p_1, q_2-q_1)$ satisfying our assumptions of goodness and closeness on $(p_1, q_1)$ and $(p_2,q_2)$. Significantly, among the approximations $\frac{p_2-p_1}{q_2-q_1}$, there is some largest denominator $Q_{\max}$. 

However, this doesn't immediately imply that there are only finitely many good close pairs of approximations $(p_1,q_1),(p_2,q_2)$, as it could be that they produce the same pair $(p_2-p_1, q_2-q_1)$ infinitely many times. That is, there can still be infinitely many pairs of $\epsilon_1$-good approximations of $\alpha$ with $\epsilon_2$-close denominators, if the same gap $q_2-q_1$ approximation occurs infinitely times.

We finally turn to $\mathcal{Q}$, the set of denominators to $\epsilon_1$-good approximations:
$$\mathcal{Q}=\left\{q\in\mathbb{Z}^{+}|\exists p\in\mathbb{Z}^+, 0<\left|\alpha-\frac{p}{q}\right|<\frac{1}{q^{\mu(\alpha)-\epsilon_1}}\right\}$$
Any two numbers in $\mathcal{Q}$ are either $\epsilon_2$-close or not. If they are, then we must have $q_2-q_1 \le Q_{\max}$. Otherwise, they are not close, and they are at least $q_1^{\epsilon_2}$ apart. This means that after any given $q_1$, the intersection $\mathcal{Q} \cap (q_1, q_1+q_1^{\epsilon_2}]$ has at most $Q_{\max}$ many elements. This is an upper bound on the density of the denominators of $\epsilon_1$-good approximations, and conversely a lower bound on their growth rate. Each interval $(q_1,q_1+q_1^{\epsilon_2}]$ contains only finitely many elements, meaning that the $n$th denominator grows as $\Omega\left(n^{\frac{1}{1-{\epsilon_2}}}\right)$.
\end{proof}

We can now give the proof of our main result.

\begin{manualtheorem}{\ref{thm:main}}[Main Result]
For any sine-like function $\mathcal{P}$ with irrational period $\alpha$, constant $v \ge 1$, and $\mu(\alpha) < 1 + \frac{u}{v}$, the series $\mathcal{S}_{u,v}(n)$ converges.
\end{manualtheorem}
\begin{proof}
 As above, we choose $x$ and $y$ as to make the sums $S_1$ and $S_3$  finite, that is, $y>0$ and $x>\mu(\alpha) + \frac{1-u}{v} - 1$. Then, we will partition the set $S_2$ into $k$ smaller sets:
\begin{align*}
T_1=\left\{q\in\mathbb{Z}^+\Big|r_\alpha(q)\in[\mu(\alpha)-x,a_1)\right\}\\
T_2=\left\{q\in\mathbb{Z}^+\Big|r_\alpha(q)\in[a_1,a_2)\right\}\\
T_3=\left\{q\in\mathbb{Z}^+\Big|r_\alpha(q)\in[a_2,a_3)\right\}\\
\dots \\
T_k=\left\{q\in\mathbb{Z}^+\Big|r_\alpha(q)\in[a_{k-1},\mu(\alpha)+y)\right\}
\end{align*}
where $k$ and $a_i$ are values to be determined. To make the description more uniform, we will denote $a_0 = \mu(\alpha)-x$ and $a_k = \mu(\alpha)+y$. Apply Theorem \ref{thm:a} to $T_i$ with $\epsilon_1 = \mu(\alpha) - a_{i-1}$ and some $\epsilon_2 = b_i > 0$. Then the set $T_i$ grows as $\Omega(n^{1/(1-b_i)})$ for any
$$b_i < 1 - \frac{\mu(\alpha)-a_{i-1}}{\mu(\alpha)-1},$$
and the sum will converge as long as the exponent in the sum
$$\left(u+v-v a_i\right)\left(\frac{1}{1-b_i}\right) > 1$$
This has a solution in $b_i$ exactly when
\begin{eqnarray}
    u+v-v a_i > \frac{\mu(\alpha)-a_{i-1}}{\mu(\alpha)-1}\\
    \iff a_i < 1 + \frac{u-1}{v} + \frac{a_{i-1}-1}{v(\mu(\alpha)-1)}.
\end{eqnarray}
This limits the size of the interval, $a_i - a_{i-1}$:
\begin{eqnarray}
       & a_i - a_{i-1}\\
     < & (1 + \frac{u-1}{v} + \frac{a_{i-1}-1}{v(\mu(\alpha)-1)}) - a_{i-1}\\
     = & \frac{a_{i-1}(1+v-v\mu(\alpha)) + (\mu(\alpha)-1)(u+v-1) -1}{v(\mu(\alpha)-1)} \label{eqn:bb}\\
     = & f(a_{i-1})
\end{eqnarray}
There will be a sequence of $a_i$ that produce finite-sum partitions, as long as the $f$ in Eqn \ref{eqn:bb} is bounded away from zero on the interval $[a_0,a_k]$. $f$ is linear in its argument, with derivative
$$f' = 1 - \frac{1}{v(\mu(\alpha)-1)}$$
Since $\mu(\alpha) \ge 2$, as long as $v\ge 1$, we know that $f'$ is nonpositive, thus its minimum value is attained at the right end of the interval, $a_k$, and $a_k$ can be chosen arbitrarily close to $\mu(\alpha)$. Evaluated at that point,
\begin{eqnarray}
    f(\mu(\alpha)) &= \frac{\mu(\alpha)(1+v-v\mu(\alpha)) + (\mu(\alpha)-1)(u+v-1) -1}{v(\mu(\alpha)-1)}\\
    &= 1 + \frac{u}{v} - \mu(\alpha)\label{eqn:cc}
\end{eqnarray}
So, in summary: as long as $2 \le \mu(\alpha) < 1 + \frac{u}{v}$ and $v \ge 1$, the linear function $f$ is nonincreasing, and is bounded above zero at the right point of the interval $[\frac{1-u}{v} - 1, \mu(\alpha)]$. This means that we can pick values $a_0$ and $a_k$ sufficiently close to the ends of the interval so that they are also positive: $f(a_0) > C$, $f(a_k) > C$, for some positive $C$. Then we can partition that interval into finitely many intervals of width at most $C$. Each interval individually contributes terms to $\mathcal{S}_2$ which have a finite sum, so $\mathcal{S}_2$ is a finite sum. Thus the whole sum $\mathcal{S}_{u,v}$ converges.
\end{proof}

This result is an almost complete converse of Alekseyev's Theorem 4, as he has showed that $\mu(\pi) \le 1 + u/v$ is necessary, and we have showed that $\mu(\pi) < 1 + u/v$ is sufficient. In the event that $\mu(\pi)=1+u/v=5/2$, this will leave unanswered whether or not the Flint Hill series converges. We can show that our bound is in some sense the best possible: there are irrational numbers $\alpha$ with $\mu(\alpha) = 1+u/v$ such that the corresponding series can diverge.

\begin{thm}\label{thm:canDiv}
For any pair $u, v> 0$, there is an irrational $\alpha$ with $\mu(\alpha)=1+\frac{u}{v}$, such that for any sine-like function $\mathcal{P}$ with period $\alpha$, both $\mathcal{A}_{u,v}$ and $\mathcal{S}_{u,v}$ diverge.
\end{thm}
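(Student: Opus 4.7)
The plan is to build $\alpha$ by specifying the continued fraction expansion of $\beta := 1/\alpha$, engineering the convergent denominators of $\beta$ to grow at the precise rate $q'_{n+1} \sim (q'_n)^{u/v}$. This forces $\mu(\alpha) = \mu(\beta) = 1 + u/v$ while simultaneously producing a subsequence of integers at which the terms of $\mathcal{A}_{u,v}$ are bounded below by a positive constant, yielding divergence of both $\mathcal{A}_{u,v}$ and $\mathcal{S}_{u,v}$. We may assume $u/v \geq 1$, since $\mu(\alpha) \geq 2$ for every irrational and the statement is vacuous otherwise.

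First I would set $\beta = [0; a_1, a_2, \ldots]$ with partial quotients defined recursively by $a_{n+1} := \lceil (q'_n)^{u/v - 1} \rceil$, where $q'_n$ is the $n$th convergent denominator (via $q'_{n+1} = a_{n+1} q'_n + q'_{n-1}$). An easy induction gives $q'_{n+1} = (q'_n)^{u/v}(1 + o(1))$ and in particular $q'_{n+1} \geq (q'_n)^{u/v}$. The classical identity $|\beta - p'_n/q'_n| = 1/(q'_n(q'_{n+1} + \theta_n q'_n))$ with $\theta_n \in [0,1]$ then yields an approximation exponent $r_\beta(q'_n) = 1 + \log q'_{n+1}/\log q'_n \to 1 + u/v$. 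Combined with the fact that the best rational approximations are convergents (so that for any $q \in [q'_n, q'_{n+1})$ one has $r_\beta(q) \leq 1 + \log(2 q'_{n+1})/\log q'_n$), this pins down $\mu(\alpha) = \mu(\beta) = 1 + u/v$ exactly.

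Divergence then follows by reading off the same convergent data. Rearranging $|\beta - p'_n/q'_n| \leq 1/(q'_n q'_{n+1})$ gives $|q'_n - \alpha p'_n| \leq \alpha/q'_{n+1} \leq \alpha (q'_n)^{-u/v}$, which eventually lies in $[-\alpha/2, \alpha/2]$, so the sine-like upper bound applies and yields $\mathcal{P}(q'_n) \leq B_2 \alpha (q'_n)^{-u/v}$. Substituting into the definition,
\[
\mathcal{A}_{u,v}(q'_n) \;=\; \frac{1}{(q'_n)^u\, \mathcal{P}(q'_n)^v} \;\geq\; \frac{1}{(q'_n)^u \cdot (B_2 \alpha)^v (q'_n)^{-u}} \;=\; \frac{1}{(B_2 \alpha)^v},
\]
a positive constant independent of $n$. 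Infinitely many terms of $\mathcal{A}_{u,v}$ exceed this fixed lower bound, so $\mathcal{A}_{u,v}$ does not tend to zero and $\mathcal{S}_{u,v}$ diverges a fortiori.

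The step I expect to require the most care is the upper bound $\mu(\beta) \leq 1 + u/v$. The construction makes the matching lower bound immediate, but one must also check that no non-convergent denominator smuggles in an approximation exponent significantly above $1 + u/v$, and that the rounding in the definition of $a_{n+1}$ does not cause $\log q'_{n+1}/\log q'_n$ to overshoot $u/v$ in the limit. Both are routine given the standard intermediate-fraction inequality $|\beta - p/q| \geq 1/(q \cdot 2 q'_{n+1})$ for $q \in [q'_n, q'_{n+1})$, but they are the only pieces of the argument that are not direct substitution.
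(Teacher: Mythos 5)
Your proof is correct and follows essentially the same approach as the paper: construct $1/\alpha$ by its continued fraction with partial quotients $a_{n+1} \approx q_n^{u/v-1}$, read off $\mu(\alpha) = 1+u/v$ from the $\limsup$ of $\log q_{n+1}/\log q_n$, and observe that the convergent denominators yield terms $\mathcal{A}_{u,v}(q_n)$ bounded below by a positive constant. Your version is marginally cleaner in that it drops the constant factor $\alpha B_2$ from $a_{n+1}$, which works because you only need a uniform positive lower bound on the terms rather than the paper's bound of $1$; you also correctly flag the (pre-existing) issue that the theorem only makes sense when $u/v \geq 1$, since no irrational has $\mu < 2$.
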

\begin{proof}
We construct $\alpha$ by giving the continued fraction expansion of its reciprocal $1/\alpha = [a_0; a_1, a_2...]$, which determines its irrationality measure (Sondow~\cite{Sondow04}):
$$\mu(\alpha)=\mu(1/\alpha)=1+\limsup_{n\to\infty}\frac{\ln(q_{n+1})}{\ln(q_n)}=2+\limsup_{n\to\infty}\frac{\ln(a_{n+1})}{\ln(q_n)}$$
where $p_n/q_n$ are the convergents of $1/\alpha$. Given a partial continued fraction expansion $[a_0 ; a_1 \dots a_k]$, we will repeatedly extend it such that $\limsup_n \mathcal{A}_{u,v}(n) = 1$ and thus the series diverges, while achieving the desired $\mu(\alpha)$.
\par If $a_{n+1}$ is an exceptionally large integer, then we can conclude that the convergent $p_n/q_n$ is an exceptionally good approximation, and leads to a large term $\mathcal{A}_{u,v}(n)$. The error is bounded by
$$\frac{1}{(a_{n+1}+2)q_n^2} < \left| \frac{1}{\alpha} - \frac{p_n}{q_n} \right| < \frac{1}{a_{n+1}q_n^2}$$
or, in terms of the approximation exponent,
$$\frac{1}{(a_{n+1}+2)q_n^2} < q^{-r(q)} < \frac{1}{a_{n+1}q_n^2}$$
so that
$$\mathcal{P}(n) = \mathcal{P}(n-\alpha m) \le B_2 \alpha n^{1-r(n)}$$
$$\mathcal{A}_{u,v}(q_n) \ge \frac{1}{(\alpha B_2)^v}\frac{1}{q_n^{u+v-vr(q_n)}} > \frac{1}{(\alpha B_2)^v}\frac{1}{q_n^{u+v}}\left(a_{n+1}q_n^2\right)^v$$
This implies that $\mathcal{A}_{u,v}(q_n) > 1$ whenever
$$a_{n+1} \ge \alpha B_2 q_n^{u/v-1}$$
Taking $a_{n+1}$ to be the ceiling of the value on the right,
$$\mu(\alpha)=2+\limsup_{n\to\infty}\frac{\ln(\alpha B_2)+\ln( q_n^{u/v-1})}{\ln(q_n)} = 2 + (u/v-1) = 1 + u/v$$
as desired. By continuing this infinitely (from some prefix fraction, say $[0;1]$) we can cause $\mathcal{S}$ to diverge, as it contains infinitely many terms at least 1.
\end{proof}
The author believes that convergence can also occur:
\begin{conjecture}
For any pair $u, v> 0$, with $1 + \frac{u}{v} > 2$, there is an irrational $\alpha$ with $\mu(\alpha)=1+\frac{u}{v}$, such that for any sine-like function $\mathcal{P}$ with period $\alpha$, the series $\mathcal{S}_{u,v}$ converges (and thus $\mathcal{A}_{u,v}$ as well).
\end{conjecture}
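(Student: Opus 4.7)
The plan is to construct $\alpha$ via continued fraction expansion, mirroring the construction in Theorem~\ref{thm:canDiv} but with partial quotients tuned only barely large enough to achieve the target exponent. Given convergent denominators $q_n$ of $1/\alpha = [0;a_1,a_2,\ldots]$, set recursively
$$a_{n+1} = \left\lceil q_n^{u/v-1}/g(n) \right\rceil$$
for a slowly growing function $g(n)$, say $g(n) = n^c$ with $c$ to be determined. The hypothesis $u/v > 1$ forces $q_n$ to grow super-exponentially, so $\ln g(n)/\ln q_n \to 0$ and the Sondow formula quoted in Theorem~\ref{thm:canDiv} still yields $\mu(\alpha) = 2 + \limsup_n \ln a_{n+1}/\ln q_n = 1 + u/v$. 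Since $1 + u/v > 2$, Legendre's theorem guarantees that no non-convergent denominator can achieve an approximation exponent close to $1 + u/v$, so the target irrationality measure is preserved along only the convergents.

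Next I would partition $\mathbb{N}$ into blocks $[q_k, q_{k+1})$ and bound the block sum of $\mathcal{A}_{u,v}$. At the convergent itself, the calculation of Theorem~\ref{thm:canDiv} gives
$$\mathcal{A}_{u,v}(q_k) \asymp q_k^{v-u}\, a_{k+1}^v \asymp g(k)^{-v},$$
which is summable whenever $cv > 1$. For the remaining $n$ in the block, I would use that the only $n$ with small $\|n/\alpha\|$ are those near a semiconvergent $q_{k,j} = j q_k + q_{k-1}$ for $j \in \{1,\ldots,a_{k+1}-1\}$; these satisfy $\|q_{k,j}/\alpha\| \asymp (a_{k+1}-j)/q_{k+1}$. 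Summing the semiconvergent contributions $q_k^{-u}/\|q_{k,j}/\alpha\|^v$ over $j$ yields, for $v > 1$, a convergent geometric series of total size $\asymp q_{k+1}^v/q_k^u = g(k)^{-v}$, matching the convergent bound exactly.

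The main obstacle will be extending this semiconvergent estimate to \emph{all} $n \in (q_k, q_{k+1})$ simultaneously. My intended strategy is a three-distance argument: every $n$ in the block is closest to some semiconvergent $q_{k,j}$ at combinatorial distance $m$, and a careful accounting should show $\|n/\alpha\| \gtrsim m/q_{k+1}$, so grouping by nearest semiconvergent and summing the geometric tails folds them into the semiconvergent estimate up to constants. The subtle point is that for $v \ge 1$ (the Flint--Hills regime) the function $1/\|x\|^v$ is non-integrable at the origin, so one cannot replace the discrete sum by a continuous integral; the sum must instead be cut off at the minimum spacing $E_k \asymp 1/q_{k+1}$ between consecutive values of $\{n/\alpha\}$, and the resulting discretization losses need to be absorbed by choosing $c$ sufficiently large in $g(n) = n^c$. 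If this program goes through, the total sum is controlled by $\sum_k g(k)^{-v}$, which converges, establishing the conjecture; the author suspects the dependence between blocks in the three-distance bookkeeping is exactly the difficulty that has so far resisted this approach.
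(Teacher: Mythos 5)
The statement you are proving is stated in the paper as a \emph{conjecture}; the paper gives no proof and remarks only that one ``would likely involve repeating an analysis similar to the main Theorem~\ref{thm:main}, but more carefully and constructively the whole way through.'' (A suppressed fragment in the source sketches an entirely different construction, splicing continued-fraction tails of auxiliary $\beta_k$'s and passing to a limit, but it is incomplete.) Your direct construction $a_{n+1}=\lceil q_n^{u/v-1}/g(n)\rceil$, $g(n)=n^c$, is a sensible choice and I believe it can be pushed to a full proof, but two of your steps need repair.

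First, organizing the block $(q_k,q_{k+1})$ by nearest semiconvergent $q_{k,j}=jq_k+q_{k-1}$ is the wrong frame, because the multiples $n=mq_k$ for $2\le m\le a_{k+1}$ also contribute at leading order: $\|mq_k/\alpha\|=m\delta_k$ with $\delta_k=\|q_k/\alpha\|\asymp 1/q_{k+1}$, so already $n=2q_k$ gives a term of size $\asymp q_k^{-u}\delta_k^{-v}\asymp g(k)^{-v}$, comparable to the convergent term itself. Second --- and this is good news --- the three-distance accounting you fear is simpler than you make it, and no delicate choice of $c$ is needed. By the three-gap theorem the $q_{k+1}$ points $\{n/\alpha\}$, $0\le n<q_{k+1}$, partition the circle into arcs of lengths $\delta_k$, $\delta_k+\delta_{k+1}$, and $2\delta_k+\delta_{k+1}$, all at least $\delta_k>1/(2q_{k+1})$; hence $\#\{n<q_{k+1}:\|n/\alpha\|<t\}\lesssim tq_{k+1}+1$ for every $t$. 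A dyadic sum then yields, for $v>1$,
\begin{equation*}
\sum_{n<q_{k+1}}\frac{1}{\|n/\alpha\|^{v}}\;\lesssim\;\sum_{i\,:\,2^{-i}\ge\delta_k}2^{iv}\bigl(2^{-i}q_{k+1}+1\bigr)\;\lesssim\;q_{k+1}^{v},
\end{equation*}
so the full block sum obeys $\sum_{q_k\le n<q_{k+1}}\mathcal{A}_{u,v}(n)\lesssim q_k^{-u}q_{k+1}^{v}\asymp g(k)^{-v}$, and convergence holds for \emph{any} $c>1/v$: the cutoff at $\delta_k$ is exactly what makes $n=q_k$ the dominant term, with no ``discretization loss'' to absorb. (If $v\le 1$ the crude replacement $n\mapsto q_k$ is too lossy and you must keep $n\asymp bq_k$ on each sub-block $[bq_k,(b+1)q_k)$; the same bound survives provided $u>1$, which the conjecture should assume anyway since otherwise $\mathcal{S}_{u,v}$ diverges for every $\alpha$.) With these corrections your plan appears to prove the conjecture, which would be a genuine addition to the paper rather than a reproduction of an existing argument.
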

but proving this would likely involve repeating an analysis similar to the main Theorem \ref{thm:main}, but more carefully and constructively the whole way through. Theorem \ref{thm:canDiv} establishes, at least, that any proof that the original Flint-Hills series converges would need some property of $\pi$ besides its irrationality measure alone.

\section{Conclusion}
We established a near converse to the work of Alekseyev. It is widely suspected that $\mu(\pi)=2$, and progress such as that of Zeilberger and Zudilin~\cite{Zeilberger} has showed that $\mu(\pi) \le 7.104\dots$, with this bound steadily decreasing over time. If at some point it is established that $\mu(\pi) \le \frac{5}{2}$, then the convergence Flint-Hills series will be solved. We also established some elementary results on the density of good rational approximations to irrationals in terms of their irrational measures.

\section{Acknowledgements}
This paper would be incomplete without a sincere thank you to Adam Wang, who initially directed the author to this problem, and whose discussions inspired the author to work on this.

\printbibliography

@article{Korhonen,
 ISSN = {00029939, 10886826},
 URL = {http://www.jstor.org/stable/20535714},
 author = {Risto Korhonen},
 journal = {Proceedings of the American Mathematical Society},
 number = {1},
 pages = {107--113},
 publisher = {American Mathematical Society},
 title = {Approximation of Real Numbers with Rational Number Sequences},
 urldate = {2022-07-12},
 volume = {137},
 year = {2009}
}

@misc{Alekseyev,
Author = {Max A. Alekseyev},
Title = {On convergence of the Flint Hills series},
Year = {2011},
Eprint = {arXiv:1104.5100},
}

@BOOK{Pickover2007,
  title     = "The mathematics of oz",
  author    = "Pickover, Clifford A",
  publisher = "Cambridge University Press",
  month     =  mar,
  year      =  2007,
  address   = "Cambridge, England"
}

@misc{Sondow04,
Author = {Jonathan Sondow},
Title = {Irrationality Measures, Irrationality Bases, and a Theorem of Jarnik},
Year = {2004},
Eprint = {arXiv:math/0406300},
}

@article{Zeilberger,
author = {Doron Zeilberger and Wadim Zudilin},
title = {{The irrationality measure of $\pi$ is at most $7.103205334137\dots$}},
volume = {9},
journal = {Moscow Journal of Combinatorics and Number Theory},
number = {4},
publisher = {MSP},
pages = {407 -- 419},
keywords = {$\pi$, Almkvist–Zeilberger algorithm, experimental mathematics, irrationality measure},
year = {2020},
doi = {10.2140/moscow.2020.9.407},
URL = {https://doi.org/10.2140/moscow.2020.9.407}
}

\end{document}